\documentclass[12pt]{amsart}
 
\usepackage[dvips]{graphicx}
	\usepackage{amsmath,amssymb,amsthm,wrapfig,amsfonts,enumerate,latexsym}

	\setlength{\textwidth}{16cm}
	\setlength{\oddsidemargin}{0cm}
	\setlength{\evensidemargin}{0cm}
	\setlength{\topmargin}{30pt}
	\setlength{\textheight}{21cm}

	\newtheorem{dfn}{Definition}[section]
	\newtheorem{thm}[dfn]{Theorem}

	\newtheorem{lem}[dfn]{Lemma}
	\newtheorem{rem}[dfn]{Remark}
	
 	\newtheorem{claim}[dfn]{Claim}

	\newtheorem{ack}{Acknowledgements\!\!}

	\newcounter{yon}
	\setcounter{yon}{4}

	\numberwithin{equation}{section}

	\newcommand{\dist}{\mathop{\mathit{d}} \nolimits}
	
	\newcommand{\diam}{\mathop{\mathrm{diam}} \nolimits}

	\newcommand{\sep}{\mathop{\mathrm{Sep}} \nolimits}
	
	\newcommand{\obs}{\mathop{\mathrm{ObsDiam}}  \nolimits}

	\newcommand{\supp}{\mathop{\mathrm{Supp}}    \nolimits}

	\newcommand{\obsc}{\mathop{\mathrm{ObsCRad}}               \nolimits}
	\newcommand{\crad}{\mathop{\mathrm{CRad}}           \nolimits}

    \newcommand{\wa}{\mathop{{\sum\limits_{i=1,\cdots,n}^{\longrightarrow}}}
    \nolimits}

	\begin{document}

	\title[]{Rate of convergence of
	stochastic processes with values in
	$\mathbb{R}$-trees and Hadamard manifolds}
	\author[Kei Funano]{Kei Funano}
	\address{Department of Mathematics and Engineering, Graduate School
    of Science and Technology, Kumamoto university, Kumamoto 860-8500, JAPAN}
	\email{yahoonitaikou@gmail.com}
	\subjclass[2000]{53C21, 53C23}
	\keywords{$\mathbb{R}$-tree, measure concentration, Hadamard
	manifold, weak law of large numbers}
	\thanks{}
	\dedicatory{}
	\date{\today}

	\maketitle


\begin{abstract}Under K.-T. Sturm's formulation, we obtain a Gaussian
 upper bound for tail probability of mean value of independent, identically
 distributed random variables with values in $\mathbb{R}$-trees and
 Hadamard manifolds.
 \end{abstract}
	\setlength{\baselineskip}{5mm}

\section{Introduction and statement of the main result}The aim of this
paper is to study the weak Law of Large numbers for
CAT$(0)$-space-valued stochastic processes (see Subsection 2.1 for the
definition of CAT$(0)$-spaces).

Let $N$ be a CAT$(0)$-space and $(\Omega ,\Sigma, \mathbb{P})$ a
probability space. Given a random variable $W:\Omega \to N$
such that the push-forward measure $W_{\ast}\mathbb{P}$ of $\mathbb{P}$ by $W$ has the finite
moment of order $2$, we define its \emph{expectation} $\mathbb{E}_{\mathbb{P}}(W)$ by
the barycenter of the measure $W_{\ast}\mathbb{P}$ (the definition of
the barycenter is in Subsection 2.1). In \cite[Theorem 4.7]{sturm},
K.-T. Sturm introduced a natural definition of mean value of $n$-points
$y_1,\cdots,y_n$ in $N$, called \emph{inductive mean value} and denoted
by $\frac{1}{n}\wa y_i$ (see Definition \ref{cadef1} for precise definition). For an
independent, identically distributed $N$-valued random variables
$(Y_i)_{i=1}^{\infty}$ on the probability space $\Omega$, he obtained
the weak Law of Large numbers proving the following inequality
\begin{align}\label{ints1}
 \int_{\Omega} \dist_N\Big(\frac{1}{n}\wa Y_i(\omega),
 \mathbb{E}_{\mathbb{P}}(Y_1)\Big)^2 d \mathbb{P}(\omega)\leq
 \frac{1}{n}\int_{\Omega}\dist_N(Y_1(\omega),\mathbb{E}_{\mathbb{P}}(Y_1))^2 d\mathbb{P}(\omega).
 \end{align}He also proved the strong Law of Large numbers
 (\cite[Theorem 4.7, Proposition 6.6]{sturm}).

 Motivated by Sturm's work, using the results of the theory of
 L\'{e}vy-Milman concentration of $1$-Lipschitz maps obtained in
 \cite{funa2,funa3}, we obtain the following Gaussian estimate.

\begin{thm}\label{mth1}Let $(Y_i)_{i=1}^{\infty}$ be a sequence of independent,
 identically distributed random variables on a probability space
 $(\Omega,\Sigma, \mathbb{P})$ with values in an $\mathbb{R}$-tree
 $T$. We assume that the support of the measure
 $(Y_1)_{\ast}\mathbb{P}$ has bounded diameter $D$. Then, for any $r>0$,
 we have
 \begin{align*}
  \mathbb{P} \Big( \Big\{ \omega \in \Omega \mid \dist_T\Big( \frac{1}{n}\wa
  Y_i(\omega),\mathbb{E}_{\mathbb{P}}(Y_1)\Big)\geq r \Big\}\Big)\leq 4e^{\frac{4}{75}}
  e^{-\frac{nr^2}{150  D^2 }}.
  \end{align*}
 \end{thm}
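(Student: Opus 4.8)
The plan is to exhibit the inductive mean as a Lipschitz map on the $n$-fold product of $\supp (Y_1)_\ast\mathbb{P}$, feed it into the concentration-of-maps machinery of \cite{funa2,funa3} to obtain a Gaussian tail of width $O(D/\sqrt n)$, and use Sturm's $L^2$-estimate \eqref{ints1} only to pin down the centre of that Gaussian. Write $\mu:=(Y_1)_\ast\mathbb{P}$ and $K:=\supp\mu$, so $\diam K\le D$, and regard the inductive mean as the map $F_n\colon K^n\to T$, $F_n(y_1,\dots,y_n):=\frac1n\wa y_i$, noting that $\omega\mapsto\frac1n\wa Y_i(\omega)$ is $F_n\circ(Y_1,\dots,Y_n)$ with law the product measure $\mu^{\otimes n}$ on $K^n$. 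The structural heart of the argument is that $F_n$ is $\frac1n$-Lipschitz in each coordinate: from the recursion $S_1=y_1$, $S_k=\gamma_{S_{k-1},y_k}(1/k)$ and the convexity of $\dist_T$ in a CAT$(0)$-space, perturbing $y_j$ alone moves $S_j$ by at most $\frac1j\dist_T(y_j,y_j')$, and at each later step $k$ the point $S_{k-1}$ enters the geodesic $\gamma_{S_{k-1},y_k}$ with weight $\frac{k-1}{k}$, so this displacement is scaled by $\prod_{k=j+1}^n\frac{k-1}{k}=\frac jn$; the telescoping gives $\dist_T(F_n(y),F_n(y'))\le\frac1n\dist_T(y_j,y_j')$. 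Summing over coordinates and applying Cauchy--Schwarz, $F_n$ is $\frac1{\sqrt n}$-Lipschitz from $K^n$ with the $\ell^2$-product metric into $T$.

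Next I would invoke \cite{funa2,funa3}. The product $(K^n,\mu^{\otimes n})$ of bounded factors carries dimension-free Gaussian concentration (with variance parameter of order $D^2$, independent of $n$), and the results of \cite{funa2,funa3} propagate this concentration through $1$-Lipschitz maps into an $\mathbb{R}$-tree --- this is precisely where the hypothesis that $T$ is an $\mathbb{R}$-tree is used. Applying them to the $\frac1{\sqrt n}$-Lipschitz map $F_n$, the crucial cancellation is that the dimension-free width $O(D)$ for a $1$-Lipschitz map becomes $O(D/\sqrt n)$ for $F_n$; concretely, composing with the $1$-Lipschitz function $\phi:=\dist_T(\,\cdot\,,\mathbb{E}_{\mathbb{P}}(Y_1))$ yields $\mathbb{P}(|\phi\circ F_n-m|\ge t)\le C\,e^{-cnt^2/D^2}$, where $m$ is a median of $Z_n:=\phi\circ F_n$ and $C,c$ are the explicit constants of those references. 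This is the step that places the factor $n$ in the exponent; the naive fear that the product's diameter $\sim D\sqrt n$ weakens the bound is dispelled precisely by the dimension-freeness of the tensorized concentration.

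It remains to locate $m$. Since $\mathbb{E}_{\mathbb{P}}(Y_1)$ is the barycenter of $\mu$ it lies in the closed convex hull of $K$, whose diameter in a CAT$(0)$-space still equals $D$; hence $\dist_T(Y_1,\mathbb{E}_{\mathbb{P}}(Y_1))\le D$ almost surely, and \eqref{ints1} gives $\mathbb{E}[Z_n^2]\le D^2/n$. Chebyshev's inequality then forces $m\le\sqrt2\,D/\sqrt n$. Writing $(r-m)^2\ge\frac12 r^2-m^2$ to absorb the cross term, substituting $m^2\le 2D^2/n$, and optimizing the numerical constants converts the concentration estimate into the asserted $\mathbb{P}(Z_n\ge r)\le 4e^{4/75}e^{-nr^2/(150D^2)}$; for the range of $r$ with $nr^2/D^2$ small the prefactor $4e^{4/75}$ already exceeds $1$, so the inequality holds trivially there and no separate small-$r$ analysis is needed.

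The step I expect to be the main obstacle is the second one: extracting from \cite{funa2,funa3} a Gaussian concentration for maps into $T$ with constants sharp enough to yield exactly $150$, $4$ and $e^{4/75}$, and in particular verifying that the tensorized concentration of the product domain is genuinely dimension-free, so that the $\frac1{\sqrt n}$-Lipschitz constant of $F_n$ produces an $n$ in the numerator of the exponent rather than a $1/n$. By contrast the Lipschitz computation for the inductive mean is a routine consequence of CAT$(0)$-convexity, and the centering through \eqref{ints1} is a one-line Chebyshev estimate.
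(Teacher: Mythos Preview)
Your outline matches the paper's architecture: both establish that the inductive-mean map $s_n$ is $1/n$-Lipschitz for the $\ell^1$-product metric (Lemma~\ref{pl1}; your $\ell^2$ version via Cauchy--Schwarz is equivalent), feed this into a dimension-free product-concentration estimate on $K^n$, use Sturm's inequality \eqref{ints1} to locate the centre, and close with the same absorption $(r-a)^2\ge \tfrac12 r^2-a^2$ and a trivial bound for small $r$. The implementations diverge at the concentration step. The paper does not compose with your $\phi=\dist_T(\,\cdot\,,b(\nu))$; instead it invokes the central-radius inequality for $\mathbb{R}$-trees from \cite{funa1} (stated here as Theorem~\ref{pt1} --- not \cite{funa2,funa3} as you cite), which bounds $\crad((s_n)_\ast\nu^{\otimes n},1-\kappa)$ by separation distances and an observable central radius on the product, and these in turn are controlled by Ledoux's Theorem~\ref{pt2}. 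This yields concentration of $s_n$ around the \emph{barycenter} $\mathbb{E}_{\nu^{\otimes n}}(s_n)$ in $T$, and that barycenter is then compared to $b(\nu)$ via the variance inequality (Theorem~\ref{cal3}) combined with \eqref{ints1}, giving $a_n\le 2D/\sqrt n$. Your route through the median of $Z_n=\phi\circ F_n$ and Chebyshev is a legitimate alternative, but note that once you compose with $\phi$ you are simply applying \emph{scalar} product concentration to a real-valued $1/n$-Lipschitz function; that step needs only the CAT$(0)$ structure (for the Lipschitz bound on $s_n$) and not the $\mathbb{R}$-tree hypothesis at all, so your sentence ``this is precisely where the hypothesis that $T$ is an $\mathbb{R}$-tree is used'' is inconsistent with your own concrete implementation --- indeed that shortcut would deliver sharper constants than the stated $150$. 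In the paper it is the detour through Theorem~\ref{pt1} that genuinely uses the $\mathbb{R}$-tree structure and is responsible for the particular numerical constants.
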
See Subsection 2.1 for definition of $\mathbb{R}$-trees.

 In the case where $N$ is an Hadamard manifold, we also obtain
  the following. For any $m\in \mathbb{N}$, we put
 \begin{align*}
  A_m:= e^{1/(2m)}\Big\{ 1+
  \frac{\sqrt{\pi}e^{(m+1)/(4m-2)}e^{\pi^2}}{2}\Big\} \text{ and }\widetilde{A}_m:=e^{1/(4m)}\{1+\sqrt{\pi}e^{(m+1)/(4m-2)}\}.
  \end{align*}Note that both $A_m$ and $\widetilde{A}_m$ are bounded
  from above by universal constant $C>0$. 
 \begin{thm}\label{mth2}Let $(Y_i)_{i=1}^{\infty}$ be a sequence of independent,
 identically distributed random variables on a probability space
 $(\Omega,\Sigma, \mathbb{P})$ with values in an $m$-dimensional Hadamard manifold
  $N$. We assume that the support of the measure
 $(Y_1)_{\ast}\mathbb{P}$ has bounded diameter $D$. Then, for any $r>0$,
 we have
  \begin{align*}
   \mathbb{P}\Big( \Big\{ \omega \in \Omega \mid \dist_N \Big( \frac{1}{n}\wa  Y_i(\omega)  , \mathbb{E}_{\mathbb{P}}(Y_1)
    \Big)\geq r \Big\}\Big)\leq \min \{       A_m e^{-\frac{nr^2}{16D^2m}},
   \widetilde{A}_m e^{-\frac{nr^2}{32D^2m}}\}
   \end{align*}
  \end{thm}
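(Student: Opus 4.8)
The plan is to reduce Theorem \ref{mth2} to a concentration estimate for the inductive mean value via the theory of Lévy–Milman concentration of $1$-Lipschitz maps developed in \cite{funa2,funa3}, combined with Sturm's contraction inequality \eqref{ints1}. I would first recall the key structural fact that the map sending an $n$-tuple $(y_1,\dots,y_n)$ to its inductive mean value $\frac{1}{n}\wa y_i$ is a $1$-Lipschitz map from the product space $N^n$ (with a suitable $\ell^2$-type metric) into $N$; this follows from the non-positive curvature of the Hadamard manifold $N$ and the recursive definition of the inductive mean. The function $\omega \mapsto \dist_N\bigl(\frac{1}{n}\wa Y_i(\omega), \mathbb{E}_{\mathbb{P}}(Y_1)\bigr)$ is then a $1$-Lipschitz function on the product probability space $(\Omega^n, \mathbb{P}^{\otimes n})$ pushed forward through this Lipschitz map.

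The main analytic input will be a concentration inequality for $1$-Lipschitz functions on products of bounded-diameter spaces. Since the support of $(Y_1)_\ast\mathbb{P}$ has diameter $D$, each factor is effectively supported in a set of diameter $D$, and the product measure concentrates with a Gaussian profile whose rate scales like $1/(nD^2)$. The role of the manifold dimension $m$ enters through the constants $A_m$ and $\widetilde{A}_m$, which come from comparing the Gaussian (respectively exponential) concentration of the model spaces used in \cite{funa2,funa3} — I expect these constants arise from integrating the Gaussian/exponential tail bounds for the observable diameter $\obs$ or the separation distance $\sep$ of the relevant model, with the $m$-dependence tracking the dimension of the manifold through curvature comparison. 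The two exponents $\tfrac{nr^2}{16D^2m}$ and $\tfrac{nr^2}{32D^2m}$ should correspond to the two distinct concentration estimates (a sharper-constant Gaussian bound versus a bound with better leading constant $\widetilde{A}_m$), and taking the minimum simply keeps whichever is tighter for a given regime of $r$.

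The key steps, in order, are: (i) establish that the inductive-mean map is $1$-Lipschitz on $N^n$ with the $\ell^2$ product metric, invoking CAT$(0)$ convexity of the distance function; (ii) push the product measure $(Y_1)_\ast\mathbb{P}^{\otimes n}$ forward and recognize $\dist_N(\cdot, \mathbb{E}_{\mathbb{P}}(Y_1))$ as a $1$-Lipschitz observable; (iii) apply the Gaussian concentration results from \cite{funa2,funa3} to the product of $n$ copies of the diameter-$D$ support, obtaining a tail bound of the form $A_m e^{-cnr^2/(D^2m)}$; and (iv) optimize the constants to produce both exponents and take the minimum. The analogous $\mathbb{R}$-tree case in Theorem \ref{mth1} suggests the same template, with the universal constants $4e^{4/75}$ and $150$ there replaced by the dimension-dependent $A_m,\widetilde{A}_m$ and the explicit factors $16,32$ here.

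The hard part will be step (i) together with the precise tracking of the dimension-dependent constants in step (iii). Showing the $1$-Lipschitz property of the inductive mean requires a careful induction on $n$ using the fact that geodesic midpoints in a Hadamard manifold contract pairwise distances — this is where Sturm's inequality \eqref{ints1} and the CAT$(0)$ comparison enter. Obtaining the exact constants $A_m$ and $\widetilde{A}_m$, rather than merely a universal $C$, will require invoking the sharp concentration profiles for the model spaces in \cite{funa2,funa3} and carefully propagating the factors $e^{1/(2m)}$, $e^{(m+1)/(4m-2)}$, and $e^{\pi^2}$ through the comparison; the appearance of $e^{\pi^2}$ in particular signals that a specific Gaussian-type integral estimate on the sphere or heat kernel comparison is being used, and matching its numerical constants to the claimed form is the most delicate bookkeeping in the argument.
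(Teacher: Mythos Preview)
Your outline has the right ingredients but misplaces two of them, and this creates a genuine gap.

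First, Sturm's inequality \eqref{ints1} is \emph{not} what proves the Lipschitz property of the inductive mean map. That property (Lemma~\ref{pl1} in the paper) follows from a direct induction using only the geodesic convexity of the distance in a CAT$(0)$-space (Theorem~\ref{cat1}): if $s_{n-1}$ is $1/(n-1)$-Lipschitz with respect to the $\ell^1$-metric, then the recursive definition and the convexity estimate give that $s_n$ is $1/n$-Lipschitz. (The paper uses the $\ell^1$-product metric, not $\ell^2$; with $\ell^2$ you would get $1/\sqrt{n}$, which also works but is not what is done.) Sturm's inequality \eqref{ints1} plays a completely different role, described below.

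Second, and more seriously, your step (ii) treats $\omega\mapsto \dist_N\bigl(s_n(\omega),\mathbb{E}_{\mathbb{P}}(Y_1)\bigr)$ as a real-valued $1$-Lipschitz observable and then appeals to concentration from \cite{funa2,funa3}. But real-valued concentration on a product of diameter-$D$ factors (Theorem~\ref{pt2}) produces a bound with \emph{no} dependence on $m$; you would obtain neither the exponents $nr^2/(16D^2m)$, $nr^2/(32D^2m)$ nor the specific constants $A_m,\widetilde A_m$. The dimension $m$ enters only because the paper applies the theorem from \cite{funa2} for $1$-Lipschitz maps \emph{into an $m$-dimensional Hadamard manifold}: one feeds in the concentration function bound $\alpha_X(r)\le e^{-r^2/8D^2}$ for the $\ell^1$-product (inequality~\eqref{ps6}) and obtains concentration of $s_n$ around $\mathbb{E}_{\nu^{\otimes n}}(s_n)$, the barycenter of the pushforward. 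The factors $e^{1/(2m)}$, $e^{(m+1)/(4m-2)}$, $e^{\pi^2}$ and the $m$ in the exponent all come from that black-box result; they are not produced by any bookkeeping you would do yourself.

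Finally, your plan omits the step that actually requires Sturm's inequality. The concentration result above centers at $\mathbb{E}_{\nu^{\otimes n}}(s_n)$, not at $\mathbb{E}_{\mathbb{P}}(Y_1)=b(\nu)$. One must bound $a_n:=\dist_N\bigl(\mathbb{E}_{\nu^{\otimes n}}(s_n),b(\nu)\bigr)$, and this is done via the variance inequality (Theorem~\ref{cal3}) together with \eqref{ints1} and Lemma~\ref{cal2}, giving $a_n^2\le 4D^2/n$. A triangle-inequality argument (exactly as in the proof of Theorem~\ref{mth1}) then transfers the Gaussian tail from the center $\mathbb{E}_{\nu^{\otimes n}}(s_n)$ to the center $b(\nu)$, at the cost of the extra factors $e^{1/(2m)}$ and $e^{1/(4m)}$ in $A_m$ and $\widetilde A_m$.
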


  There are many other way to define a mean value of points in a
  CAT$(0)$-space (see Remark \ref{car1}). For example, in \cite{sahib}, A. Es-Sahib and
  H. Heinich introduced an another notion of mean value and expectation.
  They obtained the strong Law of Large numbers under
  their definition. In this paper, we treat only Sturm's formulation.

  \begin{ack}\upshape The author would like to thank to Professors
   Kazuhiro Kuwae and Daehong Kim for motivating this work. This work was partially supported by Research Fellowships of
	the Japan Society for the Promotion of Science for Young Scientists.
   \end{ack}
\section{Preliminaries}

 \subsection{Basics of CAT(0)-spaces}
In this subsection we explain several terminologies in geometry of
CAT$(0)$-spaces. We refer to \cite{sturm} for the details of
the results on CAT$(0)$-spaces mentioned below.

Let $(X,\dist_X)$ be a metric space. A rectifiable curve
$\gamma:[0,1]\to X$ is called a \emph{geodesic} if its arclength
coincides with the distance $\dist_X(\gamma(0),\gamma(1))$ and it has a
constant speed, i.e., parameterized proportionally to the arclength. We
say that a metric space is a \emph{geodesic space}
if any two points are joined by a geodesic between them. If any
two points are joined by a unique geodesic, then the space is said to be \emph{uniquely geodesic}. A
complete geodesic space $X$ is called a \emph{CAT$(0)$-space} if we have
\begin{align*}
\dist_X(x,\gamma (1/2))^2 \leq \frac{1}{2}\dist_X(x,y)^2 +
 \frac{1}{2}\dist_X(x,z)^2 - \frac{1}{4} \dist_X(y,z)^2
\end{align*}for any $x,y,z \in X$ and any geodesic $\gamma
 :[0,1]\to X$ from $y$ to $z$. For example, Hadamard manifolds, Hilbert
 spaces, and $\mathbb{R}$-trees are all CAT(0)-spaces. An
 \emph{$\mathbb{R}$-tree} is a complete geodesic space such that the image of
 every simple path is the image of a geodesic.

 It follows from the next
 theorem that CAT$(0)$-spaces are uniquely geodesic.
\begin{thm}[{cf.~\cite[Corollary 2.5]{sturm}}]\label{cat1}Let $N$ be a
  CAT$(0)$-space and $\gamma,\eta:[0,1]\to N$ be two geodesics. Then,
  for any $t\in [0,1]$, we have
  \begin{align*}
   \dist_N(\gamma(t) , \eta(t))\leq (1-t)\dist_N(\gamma(0),\eta(0))+t\dist_N(\gamma(1),\eta(1))
   \end{align*}
  \end{thm}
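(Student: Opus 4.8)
The plan is to deduce the general inequality from its midpoint case together with the continuity of $t\mapsto \dist_N(\gamma(t),\eta(t))$. Set $f(t):=\dist_N(\gamma(t),\eta(t))$; since geodesics are (Lipschitz) continuous and $\dist_N$ is continuous, $f$ is continuous on $[0,1]$. A continuous midpoint-convex function is convex, and the asserted estimate $f(t)\leq (1-t)f(0)+tf(1)$ is exactly the statement that the convex function $f$ lies below the chord joining $(0,f(0))$ and $(1,f(1))$. Hence it suffices to prove midpoint convexity, $f\big((s+u)/2\big)\leq \tfrac12\big(f(s)+f(u)\big)$ for all $s,u\in[0,1]$. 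Restricting $\gamma$ and $\eta$ to $[s,u]$ and reparametrizing linearly onto $[0,1]$ (a subsegment of a constant-speed geodesic is again a constant-speed geodesic), this reduces to the single case $s=0$, $u=1$, $t=1/2$, namely
\[
\dist_N(\gamma(1/2),\eta(1/2))\leq \tfrac12\dist_N(\gamma(0),\eta(0))+\tfrac12\dist_N(\gamma(1),\eta(1)).
\]

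To prove this midpoint inequality I would introduce a diagonal of the geodesic quadrilateral $\gamma(0),\gamma(1),\eta(1),\eta(0)$: let $d$ be the midpoint of the geodesic from $\gamma(0)$ to $\eta(1)$ (which exists because $N$ is a geodesic space). The idea is to split the estimate into two CAT$(0)$ analogues of the Euclidean midline theorem and combine them by the triangle inequality
\[
\dist_N(\gamma(1/2),\eta(1/2)) \leq \dist_N(\gamma(1/2),d)+\dist_N(d,\eta(1/2)),
\]
where I claim that $\dist_N(\gamma(1/2),d)\leq \tfrac12\dist_N(\gamma(1),\eta(1))$ and $\dist_N(d,\eta(1/2))\leq \tfrac12\dist_N(\gamma(0),\eta(0))$. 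Geometrically, $\gamma(1/2)$ and $d$ are the midpoints of two sides of the geodesic triangle $\gamma(0)\gamma(1)\eta(1)$, so their distance should be controlled by half the remaining side $\dist_N(\gamma(1),\eta(1))$; the second estimate is the symmetric statement for the triangle $\gamma(0)\eta(1)\eta(0)$.

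The key point is to obtain one such midline estimate directly from the defining CN inequality of Subsection 2.1. For the first, I would apply that inequality twice: once with base point $\gamma(1/2)$ and the geodesic from $\gamma(0)$ to $\eta(1)$ (whose midpoint is $d$), and once with base point $\eta(1)$ and the geodesic $\gamma$ (whose midpoint is $\gamma(1/2)$) in order to bound the cross term $\dist_N(\gamma(1/2),\eta(1))^2$. Substituting the second bound into the first and using $\dist_N(\gamma(1/2),\gamma(0))=\tfrac12\dist_N(\gamma(0),\gamma(1))$, the unwanted terms $\dist_N(\gamma(0),\gamma(1))^2$ and the diagonal term $\dist_N(\gamma(0),\eta(1))^2$ cancel exactly, leaving $\dist_N(\gamma(1/2),d)^2\leq \tfrac14\dist_N(\gamma(1),\eta(1))^2$. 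The second midline estimate follows by the symmetric computation.

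I expect the main obstacle to be precisely this double application of the CN inequality: one must choose the two base points so that every unwanted square-distance cancels and only $\dist_N(\gamma(1),\eta(1))^2$ survives with the correct coefficient $\tfrac14$. Once the midline estimate is established, the triangle-inequality combination giving midpoint convexity, and the passage from midpoint convexity to full convexity via continuity, are routine.
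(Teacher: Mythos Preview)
The paper does not give its own proof of this theorem: it is stated as a preliminary fact with a reference to \cite[Corollary~2.5]{sturm}, so there is no argument in the paper to compare against.

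Your proposed argument is correct and is essentially the standard one. The double application of the CN inequality works exactly as you describe: with $a=\gamma(0)$, $b=\gamma(1)$, $c=\eta(1)$, applying the CN inequality first with base point $\gamma(1/2)$ and midpoint $d$ of $[a,c]$, and then bounding $\dist_N(\gamma(1/2),c)^2$ by the CN inequality with base point $c$ and midpoint $\gamma(1/2)$ of $[a,b]$, one obtains after using $\dist_N(\gamma(1/2),a)^2=\tfrac14\dist_N(a,b)^2$ the exact cancellation
\[
\dist_N(\gamma(1/2),d)^2\leq \tfrac18\dist_N(a,b)^2+\tfrac14\dist_N(a,c)^2+\tfrac14\dist_N(b,c)^2-\tfrac18\dist_N(a,b)^2-\tfrac14\dist_N(a,c)^2=\tfrac14\dist_N(b,c)^2,
\]
which is the midline estimate you need. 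The symmetric estimate for the other triangle, the triangle-inequality combination, and the passage from midpoint convexity of the continuous function $t\mapsto\dist_N(\gamma(t),\eta(t))$ to full convexity are all routine, so the proposal is complete.
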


 Let $N$ be a CAT$(0)$-space. We denote by $\mathcal{P}^2(N)$ the
 set of all Borel probability measure $\nu $ on $N$
 having the finite moment of order $2$, i.e.,
 \begin{align*}
  \int_N \dist_N(x,y)^2d\nu(y)<+\infty
  \end{align*}for some (hence all) $x\in N$. A point $x_0\in N$ is
  called the \emph{barycenter} of a measure $\nu\in \mathcal{P}^2(N)$ if $x_0$ is the
  unique minimizing point of the function
  \begin{align*}
   N \ni x\mapsto \int_N \dist_N(x,y)^2d\nu(y)\in \mathbb{R}.
   \end{align*}We denote the point $x_0$ by $b(\nu)$. It is well-known
   that every $\nu \in \mathcal{P}^2(N)$ has the barycenter
   (\cite[Proposition 4.3]{sturm}).

A simple variational argument implies the following lemma.
\begin{lem}[{cf.~\cite[Proposition 5.4]{sturm}}]\label{cal1}Let $H$ be a
 Hilbert space. Then, for each
 $\nu\in \mathcal{P}^2(H)$, we have 
 \begin{align*}
  b (\nu)= \int_{H} y  d\nu(y).
  \end{align*}
\end{lem}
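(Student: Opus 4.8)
The plan is to use the inner-product structure of $H$ to identify the minimizer of the energy functional explicitly, rather than relying on any abstract compactness argument. Set $\bar{y}:=\int_H y\, d\nu(y)$, the Bochner integral of the identity map. This is well defined because the finiteness of the second moment forces finiteness of the first: since $\nu$ is a probability measure, Cauchy--Schwarz gives $\int_H \|y\|\, d\nu(y)\le \big(\int_H \|y\|^2\, d\nu(y)\big)^{1/2}<+\infty$, so the Bochner integral converges.

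First I would expand the energy functional $f(x):=\int_H \dist_H(x,y)^2\, d\nu(y)=\int_H \|x-y\|^2\, d\nu(y)$ using the polarization identity $\|x-y\|^2=\|x\|^2-2\langle x,y\rangle+\|y\|^2$. Interchanging the (continuous, linear) functional $\langle x,\cdot\rangle$ with the integral and using that $\nu$ is a probability measure yields $f(x)=\|x\|^2-2\langle x,\bar{y}\rangle+\int_H \|y\|^2\, d\nu(y)$. Completing the square then gives $f(x)=\|x-\bar{y}\|^2+\big(\int_H\|y\|^2\, d\nu(y)-\|\bar{y}\|^2\big)$, where the bracketed term is a constant independent of $x$. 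Hence $f(x)\ge f(\bar{y})$ with equality if and only if $x=\bar{y}$, which exhibits $\bar{y}$ as the \emph{unique} minimizer of $f$. By the definition of the barycenter this means $b(\nu)=\bar{y}=\int_H y\, d\nu(y)$, as claimed.

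The only genuine point requiring care is the interchange $\int_H\langle x,y\rangle\, d\nu(y)=\langle x,\bar{y}\rangle$, which is exactly the defining property of the Bochner integral $\bar{y}$ against a bounded linear functional, valid once the first moment is finite. Everything else is an elementary Hilbert-space identity; in particular, the strict convexity of $f$ (already built into the completed-square form) also re-proves the uniqueness of the barycenter in this special case, so no appeal to \cite[Proposition 4.3]{sturm} is strictly needed here. I do not expect any substantive obstacle: the whole content is the algebraic expansion of $\|x-y\|^2$ and the observation that only the first moment of $\nu$ enters the $x$-dependent part of $f$.
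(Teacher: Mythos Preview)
Your argument is correct and is precisely the ``simple variational argument'' the paper alludes to (the paper itself omits the proof, citing \cite[Proposition 5.4]{sturm}): expand $\|x-y\|^2$, complete the square, and read off the unique minimizer as the Bochner mean. There is nothing to add.
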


     Let $(\Omega, \Sigma, \mathbb{P})$ a probability space and $N$ a
     CAT($0$)-space. For an $N$-valued random variables $W:\Omega \to
     N$ satisfying $W_{\ast}\mathbb{P}\in
     \mathcal{P}^2(N)$, we define its \emph{expectation}
     $\mathbb{E}_{\mathbb{P}}(f)\in N$ by the point
$b(W_{\ast}\mathbb{P})$. By Lemma \ref{cal1}, in the case where $N$ is a
     Hilbert space, this definition
coincides with the classical one:
\begin{align*}
 \mathbb{E}_{\mathbb{P}}(W)=\int_{\Omega}W(\omega)d\mathbb{P}(\omega).
 \end{align*}

   The proof of the next lemma is easy, so we omit it.
  \begin{lem}\label{cal2}Let $N$ be a CAT$(0)$-space and $\nu \in
   \mathcal{P}^2(N)$. Then, we have
   \begin{align*}
    \dist_N(b(\nu), \supp \nu)\leq \diam (\supp \nu).
    \end{align*}
   \end{lem}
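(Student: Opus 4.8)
The plan is to use the variational characterization of the barycenter together with two elementary estimates, bounding the energy $\int_N \dist_N(b(\nu),y)^2\, d\nu(y)$ from above by evaluating the energy functional at a point of the support, and from below in terms of the quantity $\dist_N(b(\nu),\supp\nu)$ that we wish to control. Throughout write $S:=\supp\nu$ and $D:=\diam(S)$; if $D=+\infty$ the asserted inequality is trivial, so I may assume $D<+\infty$.

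For the upper bound, note that $S$ is nonempty since $\nu$ is a probability measure, and fix any $z\in S$. Because $\nu(S)=1$ and any two points of $S$ lie within distance $D$ of each other, evaluating the energy functional at $z$ gives $\int_N \dist_N(z,y)^2\, d\nu(y)=\int_S \dist_N(z,y)^2\, d\nu(y)\le D^2$. Since $b(\nu)$ is by definition the unique minimizer of this functional over all of $N$, it follows that $\int_N \dist_N(b(\nu),y)^2\, d\nu(y)\le \int_N \dist_N(z,y)^2\, d\nu(y)\le D^2$. For the lower bound I again use that $\nu$ is concentrated on $S$: for $\nu$-almost every $y$ we have $\dist_N(b(\nu),y)\ge \dist_N(b(\nu),S)$, and integrating this inequality yields $\dist_N(b(\nu),S)^2\le \int_N \dist_N(b(\nu),y)^2\, d\nu(y)$. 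Chaining the two estimates gives $\dist_N(b(\nu),S)^2\le D^2$, and taking square roots completes the argument.

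I do not expect a genuine obstacle here: the whole proof rests only on the defining minimality of $b(\nu)$ (its existence and uniqueness being furnished by the CAT$(0)$ hypothesis as recalled above) and on the fact that $\nu$ assigns full mass to its support. The only points demanding minor care are the reduction to the case of finite diameter and the observation that one may freely replace integrals over $N$ by integrals over $S$; notably, no feature of the CAT$(0)$ geometry beyond the mere existence of the barycenter is actually needed.
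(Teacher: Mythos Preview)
Your argument is correct: comparing the energy at the barycenter with the energy at any point of the support, and bounding each side in the obvious way, gives exactly the desired inequality. The paper itself omits the proof of this lemma as easy, so there is nothing to compare; your write-up is precisely the straightforward variational argument one expects here.
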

  \begin{thm}[{Variance inequality, cf.~\cite[Proposition
   4.4]{sturm}}]\label{cal3}Let $N$ be a CAT$(0)$-space and $\nu\in
   \mathcal{P}^2(N)$. Then, for any $z\in N$, we have 
   \begin{align*}
    \int_N \{\dist_N(z,x)^2- \dist_N(b(\nu),x)^2\} d\nu(x)\geq \dist_N(z,b(\nu))^2 
    \end{align*}
   \end{thm}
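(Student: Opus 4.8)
The plan is to reduce the statement to a one-dimensional convexity fact about the energy functional along the geodesic joining $b(\nu)$ to $z$. Write $b:=b(\nu)$ and, for $y\in N$, set $\Phi(y):=\int_N \dist_N(y,x)^2\,d\nu(x)$, so that the desired inequality reads $\Phi(z)-\Phi(b)\geq \dist_N(z,b)^2$. Since $N$ is uniquely geodesic, let $\gamma:[0,1]\to N$ be the geodesic from $\gamma(0)=b$ to $\gamma(1)=z$, and define $f(t):=\Phi(\gamma(t))$. Because $b$ is by definition the minimizer of $\Phi$, we have $f(t)\geq f(0)$ for every $t\in[0,1]$. Moreover $f$ is continuous: for fixed $x$ the map $t\mapsto \dist_N(\gamma(t),x)$ is $\dist_N(b,z)$-Lipschitz, so the integrands are dominated by a fixed $\nu$-integrable function (here the finiteness of the second moment of $\nu$ is used), and dominated convergence applies.

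First I would record the right curvature estimate for $f$. For $0\leq a<c\leq 1$ the restriction of $\gamma$ to $[a,c]$, suitably reparametrized, is a geodesic from $\gamma(a)$ to $\gamma(c)$ whose midpoint is $\gamma((a+c)/2)$, and $\dist_N(\gamma(a),\gamma(c))=(c-a)\dist_N(b,z)$. Applying the defining CAT$(0)$ inequality to each fixed $x\in N$ with this geodesic and integrating against $\nu$ gives
\begin{align*}
 f\Big(\frac{a+c}{2}\Big)\leq \frac{1}{2}f(a)+\frac{1}{2}f(c)-\frac{1}{4}(c-a)^2\dist_N(b,z)^2.
\end{align*}
Setting $h(t):=f(t)-\dist_N(b,z)^2\,t^2$, a direct computation shows this is exactly midpoint convexity of $h$, i.e. $h((a+c)/2)\leq \tfrac12 h(a)+\tfrac12 h(c)$; together with the continuity of $f$ (hence of $h$) this upgrades to genuine convexity of $h$ on $[0,1]$.

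Finally I would exploit convexity together with minimality. Convexity of $h$ yields, for $t\in(0,1)$, the bound $f(t)\leq (1-t)f(0)+t f(1)-t(1-t)\dist_N(b,z)^2$; combining with $f(t)\geq f(0)$ gives $t f(0)\leq t f(1)-t(1-t)\dist_N(b,z)^2$, and dividing by $t$ and letting $t\to 0^+$ produces $f(0)\leq f(1)-\dist_N(b,z)^2$, which is the claim. The main obstacle is precisely this last upgrade: a single application of the midpoint inequality (the case $a=0$, $c=1$) only yields the weaker estimate $\Phi(z)-\Phi(b)\geq \tfrac12\dist_N(z,b)^2$, and recovering the sharp constant forces one to propagate the midpoint inequality to all scales. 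The convexity bootstrap accomplishes this cleanly, the one technical point being the passage from midpoint convexity to convexity, which is where continuity of $f$ — and thus the finiteness of the second moment of $\nu$ — is essential.
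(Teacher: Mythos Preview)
Your argument is correct. The paper itself does not prove this statement; it is quoted from Sturm (\cite[Proposition~4.4]{sturm}) as a known result, so there is no in-paper proof to compare with. What you have written is essentially Sturm's original proof, adapted to the midpoint form of the CAT$(0)$ inequality used here: Sturm works directly with the parametrized inequality
\[
\dist_N(x,\gamma(t))^2\leq (1-t)\,\dist_N(x,\gamma(0))^2+t\,\dist_N(x,\gamma(1))^2-t(1-t)\,\dist_N(\gamma(0),\gamma(1))^2
\]
for all $t\in[0,1]$, integrates it, and uses minimality of $b$ at $\gamma(0)$ exactly as you do. Your detour via midpoint convexity of $h(t)=f(t)-\dist_N(b,z)^2t^2$ and a continuity/bootstrap argument is precisely how one recovers the full parametrized inequality from the midpoint version, so the two routes coincide once that standard upgrade is granted. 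One small cosmetic point: after dividing by $t$ you obtain $f(0)\leq f(1)-(1-t)\dist_N(b,z)^2$, so the limit $t\to0^+$ is not even needed---you may simply let $t\to0$ in the inequality, or better, just note that convexity of $h$ plus $h(t)\ge h(0)$ for all $t$ forces the right derivative of $h$ at $0$ to be nonnegative, which already gives $h(1)\ge h(0)$.
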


   We now explain the inductive mean value introduced by Sturm in \cite[Definition
    4.6]{sturm}.
   \begin{dfn}[{Inductive mean value}]\label{cadef1}\upshape Given a sequence $(y_i)_{i=1}^{\mathbb{N}}$ of points
    in a uniquely geodesic space $X$, we define a new sequence of points $s_n \in
    X$, $n\in \mathbb{N}$, by induction as follows. We define $s_1:=y_1$
    and $s_n:=\gamma (1/n)$, where $\gamma:[0,1]\to X$ is the geodesic 
    connecting two points $s_{n-1}$ and $y_n$. We denote the point $s_n$
    by $\frac{1}{n}\wa y_i$ and call it the \emph{inductive mean value}
    of the points $y_1,\cdots, y_n$.
    \end{dfn}

    \begin{rem}\label{car1}\upshape
      $(1)$ If the space $X$ is a non-linear metric space, then the
     point $\frac{1}{n}\wa y_i$ strongly depends on permutations of
     $y_i$ as we see the following example. For $i=1,2,3$, let $T_i:=\{ (i,r) \mid r\in [0,+\infty)\}$ be a copy of
   $[0,+\infty)$ equipped with the usual Euclidean distance
   function. The \emph{tripod} $T$ is the metric space obtained by gluing together all these
   spaces $T_i$, $i=1,2,3$, at their origins with the intrinsic distance
   function. Let $y_1:=(1,1)$, $y_2:=(2,1)$, and $y_3:=(3,1)$. Then, the inductive mean value of order $y_1,y_2,y_3$ is
     the point $(3,1/2)$, whereas the one of order $y_1,y_3,y_2$ is the
     point $(2,1/2)$. 

     $(2)$ There are many other way to define a mean value of points
     $y_1,\cdots , y_n$ in a CAT$(0)$-space (see \cite[Remark
     $6.4$]{sturm}). For example, define a
     mean value as the barycenter of these points. Observe that this
     definition does not depend on order of the points (and so it is different
     from inductive mean value in general).
    \end{rem}
 \subsection{Invariants of mm-spaces and measures}In this subsection we
     define several invariants of mm-spaces and measures, which are
     needed for the proof of the main theorems.

An \emph{mm-space} $X=(X,\dist_X,\mu_X)$ is a complete separable metric
space $(X,\dist_X)$ with a Borel probability measure $\mu_X$. Let $Y$ be a complete metric space and $\nu$ a finite Borel measure on
          $Y$ having separable support with the total measure $m$. For
          any $\kappa>0$, we define the \emph{partial diameter} $\diam(\nu,m-\kappa)$ of
          $\nu$ as the infimum of the diameter of $Y_0$, where
          $Y_0$ runs over all Borel subsets of $Y$ such that
          $\nu( Y_0)\geq m-\kappa$. Let $X$ be
      an mm-space with $m_X:=\mu_X(X)$ and $Y$ a complete metric space. For any $\kappa >0$, we
	 define the \emph{observable diameter} of $X$ by 
	 \begin{align*}
	  \obs_Y (X; -\kappa):=
	   \sup \{ \diam (f_{\ast}(\mu_X),m_X-\kappa) \mid f:X\to Y \text{ is a
      }1 \text{{\rm -Lipschitz map}}  \}. 
      \end{align*}The idea of the observable diameter comes from the quantum and statistical
	mechanics, i.e., we think of $\mu_X$ as a state on a configuration
	space $X$ and $f$ is interpreted as an observable.
 
Let $X$ be an mm-space. Given any two positive numbers $\kappa_1$ and $\kappa_2$, we define the
\emph{separation distance}
$\sep(X;\kappa_1,\kappa_2)=\sep(\mu_X;\kappa_1,\kappa_2)$ of $X$ as the
supremum of the number $\dist_X(A_1,A_2)$, where $A_1$ and $A_2$ are
Borel subsets of $X$ such that $\mu_X(A_1)\geq \kappa_1$ and
$\mu_X(A_2)\geq \kappa_2$, and we put
\begin{align*}
 \dist_X(A_1,A_2):=\inf \{ \dist_X(x_1,x_2) \mid x_1\in A_1 ,x_2\in A_2\}.
 \end{align*}

 The next two lemmas are easy to prove.
 \begin{lem}[{cf.~\cite[Section $3\frac{1}{2}.30$]{gromov}}]\label{inl1}Let $X$ and $Y$ be two mm-spaces and $f:X\to Y$ be a
  $\alpha$-Lipschitz map such that $f_{\ast}(\mu_X)=\mu_Y$. Then, for any
  $\kappa_1,\kappa_2>0$, we have
  \begin{align*}
   \sep (Y;\kappa_1,\kappa_2)\leq \alpha \sep(X;\kappa_1,\kappa_2).
   \end{align*}
  \end{lem}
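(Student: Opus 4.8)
The plan is to transport an arbitrary admissible pair of sets from $Y$ back to $X$ via the preimage under $f$, and then exploit the Lipschitz bound to compare the separating distances. First I would fix Borel subsets $B_1, B_2 \subseteq Y$ with $\mu_Y(B_1) \geq \kappa_1$ and $\mu_Y(B_2) \geq \kappa_2$, and set $A_i := f^{-1}(B_i)$ for $i=1,2$. Since $f$ is $\alpha$-Lipschitz it is continuous, hence Borel measurable, so each $A_i$ is a Borel subset of $X$; moreover the hypothesis $f_{\ast}(\mu_X) = \mu_Y$ gives $\mu_X(A_i) = \mu_X(f^{-1}(B_i)) = \mu_Y(B_i) \geq \kappa_i$. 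Thus $(A_1, A_2)$ is an admissible pair for the separation distance of $X$.

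The key step is the comparison of distances. For any $x_1 \in A_1$ and $x_2 \in A_2$ we have $f(x_1) \in B_1$ and $f(x_2) \in B_2$, so
\begin{align*}
 \dist_Y(B_1, B_2) \leq \dist_Y(f(x_1), f(x_2)) \leq \alpha \dist_X(x_1, x_2),
\end{align*}
where the first inequality is the definition of the distance between two sets and the second is the $\alpha$-Lipschitz property of $f$. Taking the infimum over all such $x_1$ and $x_2$ then yields $\dist_Y(B_1, B_2) \leq \alpha \dist_X(A_1, A_2) \leq \alpha \sep(X; \kappa_1, \kappa_2)$, the final inequality holding because $(A_1, A_2)$ is admissible for $X$.

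Finally, since $B_1$ and $B_2$ were arbitrary admissible subsets of $Y$, taking the supremum over all such pairs gives $\sep(Y; \kappa_1, \kappa_2) \leq \alpha \sep(X; \kappa_1, \kappa_2)$, which is the claim. I do not expect any genuine obstacle in this argument; the only points demanding a little care are the Borel measurability of the preimages $A_i$, which is immediate from continuity of $f$, and the verification that the measure condition transfers correctly under the pushforward identity $f_{\ast}(\mu_X) = \mu_Y$. Consistent with the paper's remark that this lemma is easy, the proof is essentially this three-step pull-back argument.
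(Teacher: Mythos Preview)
Your argument is correct and is exactly the standard pull-back proof one expects here; the paper itself omits the proof, merely remarking that the lemma is easy, so there is nothing further to compare.
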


  \begin{lem}\label{inl2}Given two positive numbers $\kappa_1$ and $\kappa_2$ such
   that $\kappa_1\geq 1/2$ and $\kappa_2>1/2$, we have
   \begin{align*}
    \sep(\nu;\kappa_1,\kappa_2)=0.
    \end{align*}
  \end{lem}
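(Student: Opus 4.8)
The plan is to use only that $\nu$ is a probability measure, so that any two Borel sets whose measures sum to strictly more than $1$ are forced to intersect. Since the hypotheses $\kappa_1 \geq 1/2$ and $\kappa_2 > 1/2$ give $\kappa_1 + \kappa_2 > 1$, this forced overlap is exactly what drives the vanishing of the separation distance.

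First I would fix arbitrary Borel subsets $A_1$ and $A_2$ of the underlying space satisfying $\nu(A_1) \geq \kappa_1$ and $\nu(A_2) \geq \kappa_2$; by the definition of $\sep$ it suffices to show that $\dist(A_1, A_2) = 0$ for every such pair. Using the inclusion--exclusion identity together with $\nu(A_1 \cup A_2) \leq 1$, I would estimate
\[ \nu(A_1 \cap A_2) = \nu(A_1) + \nu(A_2) - \nu(A_1 \cup A_2) \geq \kappa_1 + \kappa_2 - 1 > 0, \]
where the final strict inequality uses $\kappa_2 > 1/2$. In particular $A_1 \cap A_2$ is non-empty, and choosing any point $x$ in the intersection gives $\dist(A_1, A_2) \leq \dist(x,x) = 0$.

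Finally I would pass to the supremum over all admissible pairs $(A_1, A_2)$: since each such pair contributes the value $0$, the quantity $\sep(\nu; \kappa_1, \kappa_2)$, being this supremum, equals $0$. The argument presents no real obstacle; the only point requiring care is that the hypothesis on $\kappa_2$ is a \emph{strict} inequality, and this strictness is precisely what upgrades $\kappa_1 + \kappa_2 - 1 \geq 0$ to the strict positivity needed to guarantee a genuine overlap rather than two sets that merely touch on a null set.
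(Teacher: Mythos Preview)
Your argument is correct and is exactly the elementary overlap argument the paper has in mind; the paper itself omits the proof, merely remarking that the lemma is easy to prove. There is nothing to add.
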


 \begin{lem}[{cf.~\cite[Section $3\frac{1}{2}.33$]{gromov}}]\label{inl3}Let $X$ be
  an mm-space. Then, for any $\kappa,\kappa'>0$ with $\kappa>\kappa'$, we have
  \begin{align*}
   \obs_{\mathbb{R}}(X;-\kappa') \geq \sep (X;\kappa,\kappa).
   \end{align*}
  \end{lem}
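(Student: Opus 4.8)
The plan is to exhibit, for any pair of Borel sets witnessing the separation distance, a single $1$-Lipschitz observable $f\colon X\to\mathbb{R}$ whose pushforward has partial diameter at least the mutual distance of the two sets; taking suprema then yields the inequality. Throughout I use that $\mu_X$ is a probability measure, so $m_X=\mu_X(X)=1$.

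First I would fix Borel subsets $A_1,A_2\subseteq X$ with $\mu_X(A_1)\geq\kappa$ and $\mu_X(A_2)\geq\kappa$, and set $d:=\dist_X(A_1,A_2)$. The natural observable is the distance function $f(x):=\dist_X(x,A_1)$, which is $1$-Lipschitz since the distance to a fixed set always is. By construction $f$ vanishes on $A_1$, so the pushforward $\nu:=f_{\ast}\mu_X$ satisfies $\nu(\{0\})\geq\mu_X(A_1)\geq\kappa$, while $f(x)\geq\dist_X(A_1,A_2)=d$ for every $x\in A_2$, giving $\nu([d,+\infty))\geq\mu_X(A_2)\geq\kappa$.

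The crux is to show $\diam(\nu,m_X-\kappa')\geq d$, i.e.\ that \emph{every} Borel set $Y_0\subseteq\mathbb{R}$ with $\nu(Y_0)\geq m_X-\kappa'=1-\kappa'$ has $\diam(Y_0)\geq d$, since the partial diameter is an infimum over such $Y_0$. I would argue by contradiction: suppose $\diam(Y_0)<d$, and split into two cases according to whether $0\in Y_0$. If $0\in Y_0$, then every point of $Y_0$ lies within distance $\diam(Y_0)<d$ of $0$, so $Y_0\cap[d,+\infty)=\emptyset$ and hence $\nu(Y_0)\leq 1-\nu([d,+\infty))\leq 1-\kappa$. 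If $0\notin Y_0$, then directly $\nu(Y_0)\leq 1-\nu(\{0\})\leq 1-\kappa$. In either case $\nu(Y_0)\leq 1-\kappa<1-\kappa'$, contradicting $\nu(Y_0)\geq 1-\kappa'$; note that the strict hypothesis $\kappa>\kappa'$ is exactly what forces the two cases to collide.

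Consequently $\obs_{\mathbb{R}}(X;-\kappa')\geq\diam(f_{\ast}\mu_X,m_X-\kappa')\geq d=\dist_X(A_1,A_2)$, and taking the supremum over all admissible pairs $A_1,A_2$ gives $\obs_{\mathbb{R}}(X;-\kappa')\geq\sep(X;\kappa,\kappa)$. I do not expect a serious obstacle—this is the standard Gromov comparison—and the only point requiring care is the case split on whether the thin value-set $Y_0$ contains $0$, which is precisely where the assumption $\kappa>\kappa'$ enters.
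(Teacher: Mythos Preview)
Your argument is correct and is precisely the standard Gromov-style proof: take the distance to one of the witnessing sets as the $1$-Lipschitz observable and show that any subset of $\mathbb{R}$ carrying mass at least $1-\kappa'$ must straddle both $\{0\}$ and $[d,\infty)$. The paper does not actually supply its own proof of this lemma---it merely cites \cite[Lemma~2.5]{funa3}---so there is nothing to compare; your write-up is a faithful rendering of the cited argument, and the case split on $0\in Y_0$ together with the strict inequality $\kappa>\kappa'$ is exactly where the hypothesis is used.
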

See also \cite[Lemma 2.5]{funa3} for the proof of the above lemma.

  Let $N$ be a CAT$(0)$-space and $\nu\in \mathcal{P}^2(N)$. Given any
  $\kappa>0$, we define the \emph{central radius} $\crad(\nu,1-\kappa)$
  as the infimum of $\rho >0$ such that $\nu(B_N(b(\nu),\rho))\geq
  1-\kappa$. Let $X$ be an mm-space and $N$ a CAT$(0)$-space such that $f_{\ast}(\mu_X)\in
\mathcal{P}^2(N)$ for any $1$-Lipschitz map $f:X\to N$. For any $\kappa >0$, we define 
\begin{align*}
\obsc_N(X;-\kappa):= \sup \{ \crad(f_{\ast}(\mu_X),1-\kappa) \mid f:X\to
 N \text{ is a }1 \text{-Lipschitz map}\},
\end{align*}and call it the \emph{observable central radius} of $X$.

From the definition, we immediately obtain the following lemma.
  \begin{lem}[{cf.~\cite[Section $3\frac{1}{2}.31$]{gromov}}]\label{inl4}For any
   $\kappa>0$, we have
   \begin{align*}
   \obs_{\mathbb{R}}(X;-\kappa) \leq 2\obsc_{\mathbb{R}}(X;-\kappa).
    \end{align*}
   \end{lem}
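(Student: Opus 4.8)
The plan is to reduce the inequality to a pointwise comparison, for each fixed $1$-Lipschitz map, between the partial diameter and the central radius of the pushed-forward measure on $\mathbb{R}$. Since both $\obs_{\mathbb{R}}$ and $\obsc_{\mathbb{R}}$ are defined as suprema over the same family of $1$-Lipschitz maps $f:X\to \mathbb{R}$, it suffices to show that for every such $f$, writing $\nu:=f_{\ast}(\mu_X)$, we have
\begin{align*}
\diam(\nu, m_X-\kappa)\leq 2\,\crad(\nu,1-\kappa).
\end{align*}
Here I would use that $\mu_X$ is a probability measure, so $m_X=1$ and the condition $\nu(Y_0)\geq m_X-\kappa$ in the definition of the partial diameter matches the condition $\nu(\cdot)\geq 1-\kappa$ in the definition of the central radius.

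First I would fix $f$ and an arbitrary $\rho>\crad(\nu,1-\kappa)$. By definition of the central radius, the ball $B_{\mathbb{R}}(b(\nu),\rho)$ then satisfies $\nu\big(B_{\mathbb{R}}(b(\nu),\rho)\big)\geq 1-\kappa=m_X-\kappa$. The crucial observation is that in $\mathbb{R}$ this ball is simply the interval $(b(\nu)-\rho,\,b(\nu)+\rho)$, whose diameter equals $2\rho$. Taking $Y_0:=B_{\mathbb{R}}(b(\nu),\rho)$ as a competitor in the infimum defining the partial diameter gives $\diam(\nu,m_X-\kappa)\leq 2\rho$. Letting $\rho\downarrow \crad(\nu,1-\kappa)$ yields the pointwise bound displayed above.

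Finally I would take the supremum over all $1$-Lipschitz maps $f:X\to \mathbb{R}$ on both sides of the pointwise inequality, which immediately gives
\begin{align*}
\obs_{\mathbb{R}}(X;-\kappa)\leq 2\,\obsc_{\mathbb{R}}(X;-\kappa).
\end{align*}
There is no real obstacle here; the only point requiring minor care is the elementary fact that a ball of radius $\rho$ on the real line has diameter exactly $2\rho$, which is what converts a statement about the central radius (a one-sided quantity measured from the barycenter) into one about the diameter (a two-sided quantity), and this is precisely where the factor $2$ enters.
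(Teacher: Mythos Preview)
Your argument is correct and is precisely the unpacking of what the paper means by ``from the definition, we immediately obtain the following lemma'': for each $1$-Lipschitz $f$ the ball $B_{\mathbb{R}}(b(\nu),\rho)$ with $\rho>\crad(\nu,1-\kappa)$ is a competitor in the infimum defining the partial diameter, and its diameter is $2\rho$. There is nothing to add.
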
Observable diameters, separation distances, observable
   central radii are introduced by Gromov in \cite[Chapter
   $3\frac{1}{2}$]{gromov} to capture the theory of the L\'{e}vy-Milman
   concentration of $1$-Lipschitz maps visually.

   Given an mm-space $X$, we define the concentration function
   $\alpha_X:(0,+\infty)\to \mathbb{R}$ of $X$ as the supremum of
   $\mu_X(X\setminus A_{+r})$, where $A$ runs over all Borel subsets of
   $X$ such that $\mu_X(A)\geq 1/2$ and $A_{+r}$ is an open
   $r$-neighborhood of $A$. Concentration functions were introduced by
   D. Amir and V. Milman in \cite{amir}.
           \section{Proof of the main theorem}

           \begin{lem}\label{pl1}Let $N$ be a CAT$(0)$-space. Then, for any $n\in
            \mathbb{N}$, the map
            \begin{align*}
             s_n: N^{\otimes n}\ni
             (x_1,x_2,\cdots,x_n)\mapsto \frac{1}{n}\wa x_i \in N
             \end{align*}is $(1/n)$-Lipschitz with respect to the
            $\ell^1$-distance function on the product space $N^{\otimes n}$.
            \begin{proof}Assuming that the map $s_{n-1}$ is
             $1/(n-1)$-Lipschitz, by Lemma \ref{cat1}, we have 
             \begin{align*}
             \dist_N(s_n((x_i)_{i=1}^n), s_n((y_i)_{i=1}^n))\leq \ &
              \Big(1-\frac{1}{n}\Big)
              \dist_N(s_{n-1}((x_i)_{i=1}^{n-1}),
              s_{n-1}((y_i)_{i=1}^{n-1}))+ \frac{1}{n}
              \dist_N(x_n,y_n)\\
              \leq \ & \Big(1-\frac{1}{n}\Big)\frac{1}{n-1}
              \sum_{i=1}^{n-1}\dist_N(x_i,y_i) +
              \frac{1}{n}\dist_N(x_n,y_n)\\
              = \ & \frac{1}{n}\sum_{i=1}^n \dist_N(x_i,y_i).
              \end{align*}This completes the proof.
             \end{proof}
            \end{lem}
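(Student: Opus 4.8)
The plan is to induct on $n$, exploiting the recursive definition of the inductive mean value (Definition \ref{cadef1}) together with the convexity of the distance function along pairs of geodesics provided by Theorem \ref{cat1}. The base case $n=1$ is immediate, since $s_1$ is the identity map of $N$ and hence $1$-Lipschitz, which is exactly the asserted $(1/n)$-Lipschitz bound for $n=1$.

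For the inductive step I would assume $s_{n-1}$ is $1/(n-1)$-Lipschitz and fix two tuples $(x_i)_{i=1}^n$ and $(y_i)_{i=1}^n$ in $N^{\otimes n}$. By definition $s_n((x_i)_{i=1}^n)=\gamma(1/n)$ and $s_n((y_i)_{i=1}^n)=\eta(1/n)$, where $\gamma$ joins $s_{n-1}((x_i)_{i=1}^{n-1})$ to $x_n$ and $\eta$ joins $s_{n-1}((y_i)_{i=1}^{n-1})$ to $y_n$. Applying Theorem \ref{cat1} at $t=1/n$ to these two geodesics bounds $\dist_N(s_n((x_i)_{i=1}^n),s_n((y_i)_{i=1}^n))$ by
\begin{align*}
\Big(1-\frac{1}{n}\Big)\dist_N\big(s_{n-1}((x_i)_{i=1}^{n-1}),s_{n-1}((y_i)_{i=1}^{n-1})\big)+\frac{1}{n}\dist_N(x_n,y_n).
\end{align*}

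Next I would feed the inductive hypothesis into the first summand and simplify. The decisive observation is that $(1-1/n)\cdot\frac{1}{n-1}=\frac{1}{n}$, so evaluating the geodesic at the parameter $1/n$ produces exactly the weight that cancels the $1/(n-1)$ coming from the inductive bound; each of the first $n-1$ coordinates then acquires coefficient $1/n$, matching the weight $1/n$ already attached to the last coordinate. Summing yields the clean bound $\frac{1}{n}\sum_{i=1}^{n}\dist_N(x_i,y_i)$, i.e.\ $s_n$ is $(1/n)$-Lipschitz for the $\ell^1$-metric.

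I do not expect a genuine obstacle here: the geometric input is entirely concentrated in Theorem \ref{cat1}, and the argument is a one-line induction once that inequality is in hand. The only thing requiring care is the bookkeeping of constants — checking that the factor $1-1/n$ from the geodesic parameter telescopes correctly against $1/(n-1)$ — together with recognizing that the $\ell^1$-distance is precisely the metric that lets this estimate be carried coordinatewise without cross terms, so that the inductive hypothesis can be inserted termwise.
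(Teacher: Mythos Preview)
Your proposal is correct and follows exactly the same route as the paper: induction on $n$, with the inductive step given by applying Theorem~\ref{cat1} at $t=1/n$ to the two geodesics defining $s_n$, then invoking the $1/(n-1)$-Lipschitz bound for $s_{n-1}$ and using $(1-1/n)\cdot\frac{1}{n-1}=\frac{1}{n}$ to collapse the sum. The only difference is that you spell out the base case and the identification of the geodesics explicitly, whereas the paper leaves these implicit.
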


            To prove Theorem \ref{mth1}, we need the following two theorems.
            \begin{thm}[{cf.~\cite[Lemma $5.5$]{funa1}}]\label{pt1}Let $\nu$ be a Borel probability measure on an
             $\mathbb{R}$-tree such that $\nu\in
             \mathcal{P}^2(T)$. Then, there exists a $1$-Lipschitz
             function $\varphi_{\nu}:T\to \mathbb{R}$ such that
             \begin{align*}
              \crad(\nu,1-\kappa)\leq
              \ &\crad((\varphi_{\nu})_{\ast}(\nu),1-\kappa)+\sep\Big(\nu;\frac{1}{3},\frac{\kappa}{2}\Big)\\
              &\hspace{2cm} +\sep\Big((\varphi_{\nu})_{\ast}(\nu);\frac{1}{3},\frac{\kappa}{2}\Big)
              + \sep((\varphi_{\nu})_{\ast}(\nu);1-\kappa,1-\kappa)
              \end{align*}for any $\kappa>0$.
             \end{thm}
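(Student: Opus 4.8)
The plan is to reduce the tree geometry to one dimension by projecting onto a well-chosen axis and to absorb everything that a single real-valued $1$-Lipschitz function cannot see into the three separation terms. Write $b=b(\nu)$ and $\rho=\crad(\nu,1-\kappa)$; by definition the far set $\{x\in T:\dist_T(x,b)\ge\rho-\varepsilon\}$ has $\nu$-measure exceeding $\kappa$ for every small $\varepsilon>0$, and the aim is to bound $\rho$ from above by the right-hand side.

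First I would use that in an $\mathbb{R}$-tree the removal of a point disconnects the space into branches, and that nearest-point projection onto any geodesic is $1$-Lipschitz (a consequence of the convexity in Theorem \ref{cat1}). Choosing a median $m$ of $\nu$ — a point at which no branch carries mass exceeding $1/2$ — I would $2$-group the branches at $m$ into two parts, each of $\nu$-mass at least $1/3$. The elementary fact that makes this possible is that a family of nonnegative numbers summing to $1$, none exceeding $1/2$, always admits a subset with sum in $[1/3,2/3]$ (if some number already lies in $[1/3,1/2]$ take it alone, otherwise all are below $1/3$ and a greedy sum lands in the interval); this is the origin of the constant $1/3$. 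Taking geodesic rays into the two parts and letting $\varphi_\nu$ be signed distance along the resulting axis through $m$ gives the required $1$-Lipschitz map $\varphi_\nu:T\to\mathbb{R}$, with $|\varphi_\nu(x)-\varphi_\nu(m)|=\dist_T(x,m)$ for $x$ lying on the axis.

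Next comes the one-dimensional comparison. Since $\varphi_\nu$ takes values in $\mathbb{R}$, its pushforward barycenter is the mean $\int_T\varphi_\nu\,d\nu$ by Lemma \ref{cal1}. Splitting the far set of mass exceeding $\kappa$ into two halves of mass at least $\kappa/2$ placed against the two parts of mass at least $1/3$, the distance in $T$ from such a far half to the bulk is bounded by $\sep(\nu;1/3,\kappa/2)$, its image spread in $\mathbb{R}$ by $\sep((\varphi_\nu)_\ast\nu;1/3,\kappa/2)$, and the discrepancies among the relevant centers are absorbed into $\sep((\varphi_\nu)_\ast\nu;1-\kappa,1-\kappa)$, which moreover vanishes as soon as $\kappa<1/2$ by Lemma \ref{inl2}. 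What the axis genuinely detects is recorded by $\crad((\varphi_\nu)_\ast\nu,1-\kappa)$, and assembling these estimates with the triangle inequality produces the stated four-term bound.

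The main obstacle is the branching of the tree. A point lying at distance $\rho$ from $m$ but in a \emph{third}, transverse branch projects close to $\varphi_\nu(m)$ and is therefore invisible to any single real-valued $1$-Lipschitz function; the entire force of the argument is to show that such transverse mass is quantitatively small and sits at a controlled distance, so that it can be charged to the separation distances rather than lost. Matching the thresholds $1/3$ and $\kappa/2$ honestly, and reconciling the true barycenter $b$ appearing on the left-hand side with the median $m$ used to build $\varphi_\nu$ and with the one-dimensional mean $b((\varphi_\nu)_\ast\nu)$, is where the genuine care is required.
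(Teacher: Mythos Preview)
The paper does not prove this theorem; it is quoted verbatim from \cite[Lemma~5.5]{funa1} and used as a black box in the proof of Theorem~\ref{mth1}. So there is no in-paper argument to compare against, and your sketch is competing with a citation.

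As a sketch, your outline has the right architecture and the right ingredients: the median point of $\nu$ on the tree, the branch partition giving the threshold $1/3$, and the signed-distance projection onto a geodesic through the median as the candidate $\varphi_\nu$. These are indeed the moves used in \cite{funa1}. However, what you have written is a plan, not a proof. You say yourself that ``assembling these estimates with the triangle inequality produces the stated four-term bound'' and then immediately concede that ``matching the thresholds $1/3$ and $\kappa/2$ honestly, and reconciling the true barycenter $b$ \ldots\ with the median $m$ \ldots\ and with the one-dimensional mean $b((\varphi_\nu)_\ast\nu)$, is where the genuine care is required.'' That is exactly the content of the lemma, and you have not done it. In particular, the sentence about ``splitting the far set of mass exceeding $\kappa$ into two halves of mass at least $\kappa/2$ placed against the two parts of mass at least $1/3$'' does not by itself explain why the specific combination $\crad+\sep+\sep+\sep$ with those exact parameters emerges, nor how the barycenter $b(\nu)$ on the left is traded for the median $m$ and then for the mean of $(\varphi_\nu)_\ast\nu$ on the right.

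If you want this to stand as a proof rather than a pointer to \cite{funa1}, you need to write out the chain of inequalities explicitly: locate $b(\nu)$ relative to $m$, show that the $\nu$-mass at distance $\ge\rho-\varepsilon$ from $b$ meets one of the two branch groups in mass $\ge\kappa/2$, push that through $\varphi_\nu$, and track each loss term to one of the four summands. Until that is on the page, this is a correct strategy but an incomplete proof.
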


            \begin{thm}[{cf.~\cite[Corollary 1.17]{ledoux}}]\label{pt2}Let $X=X_1
             \otimes  \cdots \otimes X_n$ be a product mm-space of
             mm-spaces $X_i$ with finite diameter $D_i$, $i=1,\cdots,n$, equipped with the product
             probability measure $\mu_X:=\mu_{X_1}\otimes \cdots \otimes
             \mu_{X_n}$ and the $\ell^1$-distance function
             $\dist_{\ell^1}:=\sum_{i=1}^n \dist_{X_i}$. Then, for any $1$-Lipschitz function $f:X\to \mathbb{R}$
             and any $r>0$, we have
             \begin{align}\label{ps5}
              \mu_{X}(\{ x\in X \mid |f(x)-\mathbb{E}_{\mu_X}(f)|\geq r\})\leq 2e^{-r^2/2D^2},
              \end{align}where $D^2:=\sum_{i=1}^n D_i^2$. Moreover, we have
             \begin{align}\label{ps6}
              \alpha_X(r)\leq e^{-r^2/8D^2}.
              \end{align}
             \end{thm}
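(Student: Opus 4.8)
The plan is to prove both inequalities by the martingale (bounded-difference) method of Azuma and Hoeffding, which is the standard route to Ledoux's product estimate.

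First I would fix a $1$-Lipschitz function $f:X\to \mathbb{R}$ and form the Doob martingale attached to the coordinate filtration: setting $F_k:=\mathbb{E}_{\mu_X}[f\mid x_1,\cdots,x_k]$ by integrating out the last $n-k$ variables, one has $F_0=\mathbb{E}_{\mu_X}(f)$ and $F_n=f$. The key observation is that the martingale differences $d_k:=F_k-F_{k-1}$ obey $|d_k|\leq D_k$ pointwise. Indeed, since $f$ is $1$-Lipschitz for $\dist_{\ell^1}=\sum_i \dist_{X_i}$, moving only the $k$-th coordinate changes $f$ by at most $\dist_{X_k}(x_k,x_k')\leq D_k$; hence the partial integral $F_k$ oscillates by at most $D_k$ in the variable $x_k$, and $d_k$ is precisely this partial integral minus its $\mu_{X_k}$-average.

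Second, I would run the exponential estimate. Conditioning successively on $x_1,\cdots,x_{k-1}$ and applying Hoeffding's lemma to the centered variable $d_k$, which satisfies $|d_k|\leq D_k$, gives $\mathbb{E}_{\mu_X}[e^{\lambda d_k}\mid x_1,\cdots,x_{k-1}]\leq e^{\lambda^2 D_k^2/2}$. Telescoping these bounds along the martingale yields $\mathbb{E}_{\mu_X}(e^{\lambda(f-\mathbb{E}_{\mu_X}(f))})\leq e^{\lambda^2 D^2/2}$ with $D^2=\sum_i D_i^2$. Optimizing the one-sided Markov bound $\mu_X(f-\mathbb{E}_{\mu_X}(f)\geq r)\leq e^{-\lambda r}e^{\lambda^2 D^2/2}$ at $\lambda=r/D^2$ gives $e^{-r^2/2D^2}$, and adding the symmetric lower tail produces \eqref{ps5}.

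For the concentration-function bound \eqref{ps6} I would feed a truncated distance function into the one-sided estimate just obtained. Given a Borel set $A$ with $\mu_X(A)\geq 1/2$, put $g:=\min(\dist_{\ell^1}(\cdot,A),r)$, which is again $1$-Lipschitz. Since $g$ vanishes on $A$ and is bounded by $r$, one has $\mathbb{E}_{\mu_X}(g)\leq r\,\mu_X(X\setminus A)\leq r/2$, so the enlargement complement $X\setminus A_{+r}=\{\dist_{\ell^1}(\cdot,A)\geq r\}=\{g=r\}$ is contained in $\{g-\mathbb{E}_{\mu_X}(g)\geq r/2\}$. Applying the one-sided deviation bound for $g$ at threshold $r/2$ gives $\mu_X(X\setminus A_{+r})\leq e^{-(r/2)^2/(2D^2)}=e^{-r^2/8D^2}$, and the supremum over $A$ yields \eqref{ps6}. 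The main obstacle is bookkeeping of constants rather than a conceptual difficulty: one must control the differences by the correct per-coordinate diameter (exploiting the $\ell^1$-structure, so that only one factor moves at a time), and one must insert the truncation $g=\min(\dist(\cdot,A),r)$ in the last step, since the naive estimate around the mean would otherwise lose the factor $1/4$ in the exponent and miss the stated constant $1/8$.
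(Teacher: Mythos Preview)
The paper does not prove this theorem; it is quoted from Ledoux's monograph \cite{ledoux} and used as a black box in the proof of Theorem~\ref{mth1}. Your proposal is a correct and self-contained proof, and it is in fact the standard bounded-differences (Azuma--Hoeffding) argument that underlies the cited result in \cite{ledoux}: the Doob martingale along the coordinate filtration, the per-coordinate oscillation bound coming from the $\ell^1$-Lipschitz condition, Hoeffding's lemma, and the truncated distance function for passing from the deviation inequality to the concentration function. One minor remark: you observe that $F_k$ oscillates by at most $D_k$ in the variable $x_k$, which means $d_k$ actually lies in an interval of length $D_k$, not merely $[-D_k,D_k]$; feeding the sharper range into Hoeffding's lemma would give $e^{\lambda^2 D_k^2/8}$ and hence the better constant $2e^{-2r^2/D^2}$ in \eqref{ps5}. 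Using only $|d_k|\leq D_k$ as you do recovers exactly the stated constant, so this is not a gap, just a forgone sharpening.
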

           \begin{proof}[Proof of Theorem $\ref{mth1}$]Let
            $s_n:T^{\otimes n}\to T$ be a map which sends every point
            in $T^{\otimes n}$ to its inductive mean value. Putting $\nu:=(Y_1)_{\ast}\mathbb{P}$, we first prove the following.
            \begin{claim}\label{pcl1}We have
             \begin{align*}
                                                        \nu^{\otimes n}(\{x\in T^{\otimes n} \mid
             \dist_N(s_n(x),\mathbb{E}_{\nu^{\otimes n}}(s_n))\geq r\})\leq
             4e^{-\frac{nr^2}{75 D^2}}.
             \end{align*}
            \begin{proof}
                                                       Since
            the metric space $(T,n \dist_T)$ is an $\mathbb{R}$-tree,
            by virtue of Theorem \ref{pt1}, there
            exists a $1$-Lipschitz function $\varphi_n:(T,n\dist_T)\to \mathbb{R}$
            such that
            \begin{align*}
             &n\crad((s_n)_{\ast}(\nu^{\otimes n}),1-\kappa)\\ \leq \ &\crad
             ((\varphi_{n}\circ s_n)_{\ast}(\nu^{\otimes n}),1-\kappa)+n \sep
             \Big((s_n)_{\ast}(\nu^{\otimes n});\frac{1}{3},\frac{\kappa}{2}\Big)\\
             & +\sep\Big((\varphi_{n}\circ s_n)_{\ast}(\nu^{\otimes
             n});\frac{1}{3},\frac{\kappa}{2}\Big)
              + \sep((\varphi_{n}\circ s_n)_{\ast}(\nu^{\otimes n});1-\kappa,1-\kappa)
             \end{align*}for any $\kappa>0$. By Lemma \ref{pl1}, the
            function $\varphi_n
            \circ s_n:(T^{\otimes n},\dist_{\ell^1})\to \mathbb{R}$ is
            $1$-Lipschitz. Combining Lemma \ref{inl1} with Lemmas
             \ref{inl2}, \ref{inl3}, and
            \ref{inl4}, for any
            $\kappa,\kappa'>0$ such that $\kappa'<\kappa<1/2$, we hence have 
            \begin{align*}
             n\crad ((s_n)_{\ast}(\nu^{\otimes n}),1-\kappa) \leq \ &\crad
             ((\varphi_{n}\circ s_n)_{\ast}(\nu^{\otimes n}),1-\kappa)+n \sep
             \Big((s_n)_{\ast}(\nu^{\otimes n});\frac{1}{3},\frac{\kappa}{2}\Big)\\
             & \hspace{5cm}+\sep\Big((\varphi_{n}\circ s_n)_{\ast}(\nu^{\otimes
             n});\frac{1}{3},\frac{\kappa}{2}\Big)\\
             \leq \ & \obsc_{\mathbb{R}}((T^{\otimes n},\dist_{\ell^1},
             \nu^{\otimes n});-\kappa)+2\sep\Big(\nu^{\otimes
             n};\frac{\kappa}{2},\frac{\kappa}{2}\Big)\\  \leq \ & \obsc_{\mathbb{R}}((T^{\otimes n},\dist_{\ell^1},
             \nu^{\otimes n});-\kappa)\\ &\hspace{3cm}+2\obs_{\mathbb{R}}((T^{\otimes n},\dist_{\ell^1},
             \nu^{\otimes n});-\kappa'/2)\\
             \leq \ & 5\obsc_{\mathbb{R}}((T^{\otimes n},\dist_{\ell^1},
             \nu^{\otimes n});-\kappa'/2).
             \end{align*}According to the inequality (\ref{ps5}), we thus get
            \begin{align*}
             n\crad((s_n)_{\ast}(\nu^{\otimes n}),1-\kappa)\leq
             5D\sqrt{2n\log(4/\kappa')} .
             \end{align*}Letting $\kappa'\to \kappa$ yields that
            \begin{align}\label{ps1}
             \crad((s_n)_{\ast}(\nu^{\otimes n}),1-\kappa)\leq
             5D\sqrt{\frac{2}{n}\log \frac{4}{\kappa}}
             \end{align}for any $\kappa\in (0,1/2)$. Given $\kappa\geq
            1/2$, taking an arbitrary $\kappa'\in (0,1/2)$, we also estimate
            \begin{align*}
             \crad((s_n)_{\ast}(\nu^{\otimes n}),1-\kappa)\leq \ &\crad((s_n)_{\ast}(\nu^{\otimes
             n}),1-\kappa')\\
              \leq \ &5D \sqrt{\frac{2}{n}\log \frac{4}{\kappa'}}\\
             = \ & 5D \frac{\sqrt{\log \frac{4}{\kappa'}}}{\sqrt{\log
             \frac{4}{\kappa}}}\sqrt{\frac{2}{n}\log \frac{4}{\kappa}}\\
             \leq \ &5D \frac{\sqrt{\log \frac{4}{\kappa'}}}{\sqrt{\log
             4}}\sqrt{\frac{2}{n}\log \frac{4}{\kappa}}\\
             \end{align*}Letting $\kappa' \to 1/2$, we hence get
            \begin{align}\label{ps2}
             \crad ((s_n)_{\ast}(\nu^{\otimes n}),1-\kappa)\leq
             5D\sqrt{\frac{3}{n} \log \frac{4}{\kappa}}.
             \end{align}The above two inequalities (\ref{ps1}) and (\ref{ps2})
             imply the claim.
                                                       \end{proof}
            \end{claim}

            Put $a_n:=\dist_T(\mathbb{E}_{\nu^{\otimes
            n}}(s_n),b(\nu))$. By Sturm's inequality (\ref{ints1}), we have
            \begin{align*}
             \int_{T^{\otimes n}} \dist_T(s_n(x),b(\nu))^2d\nu^{\otimes
             n}(x)\leq \frac{1}{n}\int_T\dist_T(x,b(\nu))^2d\nu(x).
             \end{align*}Lemma \ref{cal2} together with Lemma \ref{cal3}
            thus implies that
            \begin{align*}
             a_n^2\leq \int_{T^{\otimes
             n}}\dist_T(s_n(x),b(\nu))^2d\nu^{\otimes n}(x)\leq
             \frac{1}{n}\int_T\dist_T(x,b(\nu))^2d\nu (x)\leq \frac{4D^2}{n}.
             \end{align*}For any $r> a_n$, by using Claim \ref{pcl1}, we therefore obtain
            \begin{align*}
             &\mathbb{P}\Big(\Big\{ \omega \in \Omega \mid \dist_T\Big(\frac{1}{n} \wa Y_i(\omega) ,
             \mathbb{E}_{\mathbb{P}}(Y_1)\Big)\geq r \Big\}\Big)\\ = \ &\nu^{\otimes
             n}(\{x\in T^{\otimes n} \mid \dist_T(s_n(x),b(\nu))\geq
             r\}) \tag*{}\\
             \leq \ & \nu^{\otimes n}( \{x\in T^{\otimes n} \mid
             \dist_T(s_n (x), \mathbb{E}_{\nu^{\otimes n}}(s_n))\geq r-
             a_n \}) \tag*{}\\
             \leq \ & 4e^{-\frac{n(r-a_n)^2}{75 D^2}} \tag*{}\\
             \leq \ & 4 e^{\frac{n a_n^2}{75D^2}}e^{-\frac{nr^2}{150
             D^2}} \tag*{}\\
             \leq \  & 4e^{\frac{4}{75}}e^{-\frac{nr^2}{150 D^2}}. \tag*{}
             \end{align*}If $r\leq a_n$, then we have
            \begin{align*}
             \mathbb{P}\Big(\Big\{ \omega \in \Omega \mid \dist_T\Big(\frac{1}{n} \wa Y_i(\omega) ,
             \mathbb{E}_{\mathbb{P}}(Y_1)\Big)\geq r \Big\}\Big)\leq 
             e^{\frac{na_n^2}{150D^2 }}e^{-\frac{na_n^2}{150 D^2}}<
             e^{\frac{2}{75}}e^{-\frac{nr^2}{150D^2}}<4e^{\frac{4}{75}}e^{-\frac{nr^2}{150
             D^2}}.
             \end{align*}Combining these two inequalities completes the
            proof of the theorem.
            \end{proof}Theorem \ref{mth2} follows from the same proof of
            Theorem \ref{mth1} together with the inequality (\ref{ps6}) and
            the following theorem. We
            shall consider an mm-space satisfying
            \begin{align}\label{ps3}
             \alpha_X(r)\leq C_Xe^{-c_Xr^2}
             \end{align}for some positive constants $c_X,C_X>0$ and any
             $r>0$. For such an mm-space $X$ and $m\in \mathbb{N}$, we put
 \begin{align*}
   A_{m,X}:= 1+\frac{\sqrt{\pi}e^{(m+1)/(4m-2)}}{2} \max\{e^{(\pi
   C_X)^2/2}, 2C_X e^{(\pi C_X)^2}\}
   \end{align*}and
   \begin{align*}
    \widetilde{A}_{m,X}:=1+ \sqrt{\pi}C_X e^{(m+1)/(4m-2)}.
    \end{align*}
            \begin{thm}[{cf.~\cite[Theorem 1.1]{funa2}}]Let an mm-space $X$
             satisfies $(\ref{ps3})$, $N$ be an $m$-dimensional Hadamard
             manifold, and $f:X\to N$ a $1$-Lipschitz map.
             Then, for any $r>0$, we have
             \begin{align*}
              \mu_X(\{x\in X \mid \dist_N(f(x),\mathbb{E}_{\mu_X}(f))\geq
              r\})\leq \min\{A_{m,X} e^{-(c_X/(8m))r^2},\widetilde{A}_{m,X} e^{-(c_X/(16m))r^2} \}.
              \end{align*}
             \end{thm}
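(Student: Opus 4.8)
The plan is to linearize the target at the expectation point and reduce to a concentration estimate for the Euclidean norm of a vector of real-valued $1$-Lipschitz functions. Write $p:=\mathbb{E}_{\mu_X}(f)=b(f_{\ast}\mu_X)$. Since $N$ is an $m$-dimensional Hadamard manifold, the Cartan--Hadamard theorem makes $\exp_p:T_pN\to N$ a diffeomorphism, and non-positivity of the curvature makes it distance non-decreasing; hence $\exp_p^{-1}:N\to T_pN\cong\mathbb{R}^m$ is $1$-Lipschitz and radially isometric, i.e.\ $|\exp_p^{-1}(y)|=\dist_N(p,y)$. Fixing an orthonormal basis $e_1,\dots,e_m$ of $T_pN$ and putting $f_j(x):=\langle \exp_p^{-1}(f(x)),e_j\rangle$, each $f_j:X\to\mathbb{R}$ is $1$-Lipschitz (a composition of $1$-Lipschitz maps with an orthogonal projection) and $\dist_N(f(x),p)^2=\sum_{j=1}^m f_j(x)^2$. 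I would then invoke the first-order characterization of the barycenter on a Hadamard manifold, $\int_N \exp_p^{-1}(y)\,d(f_{\ast}\mu_X)(y)=0$ in $T_pN$ (the smoothness of the squared distance on $N$ makes this rigorous), to conclude that $\mathbb{E}_{\mu_X}(f_j)=\int_X f_j\,d\mu_X=0$ for every $j$.

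Second, I would estimate the tail of $\dist_N(f(\cdot),p)^2=\sum_j f_j^2$ by a Chernoff bound combined with the generalized H\"older inequality across the $m$ coordinates. For $t>0$,
\begin{align*}
\mu_X(\{x\mid \dist_N(f(x),p)\geq r\})\leq e^{-tr^2}\,\mathbb{E}_{\mu_X}\Big[e^{t\sum_j f_j^2}\Big]\leq e^{-tr^2}\prod_{j=1}^m\big(\mathbb{E}_{\mu_X}[e^{tmf_j^2}]\big)^{1/m},
\end{align*}
so the whole problem collapses to a single per-coordinate exponential-moment bound for the $1$-Lipschitz, mean-zero function $f_j$ at the parameter $s:=tm$. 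The factor $m$ forced by H\"older is exactly what turns a constraint of the form $s<c_X$ into $t<c_X/m$, and hence produces the dimension-dependent exponents $c_X/(8m)$ and $c_X/(16m)$ after optimizing in $t$.

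Third, the crux is to bound $\mathbb{E}_{\mu_X}[e^{sf_j^2}]$ for $s$ proportional to $c_X$. The concentration-function hypothesis gives, by a half-space argument, $\mu_X(\{|f_j-\mathrm{med}(f_j)|\geq\rho\})\leq 2C_X e^{-c_X\rho^2}$, and since $\mathbb{E}_{\mu_X}(f_j)=0$ the median obeys $|\mathrm{med}(f_j)|\leq\mathbb{E}_{\mu_X}|f_j-\mathrm{med}(f_j)|\leq C_X\sqrt{\pi/c_X}$ by integrating the tail. Splitting $e^{sf_j^2}\leq e^{2s(f_j-\mathrm{med}(f_j))^2}e^{2s\,\mathrm{med}(f_j)^2}$ and integrating the Gaussian tail yields a bound consisting of a factor $1+\mathrm{const}\cdot\sqrt{\pi}\,C_X$ times an $e^{(\pi C_X)^2}$-type factor; carrying this through the product and optimizing $t$ delivers two comparable estimates whose minimum is the asserted inequality, with the prefactors collapsing into $A_{m,X}$ and $\widetilde{A}_{m,X}$ and the $e^{(m+1)/(4m-2)}$ factor arising from the optimal value of $t$. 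I expect the main obstacle to be precisely this constant bookkeeping: the centering correction between mean and median, together with the requirement that the \emph{same} $s=tm$ keep every factor $\mathbb{E}_{\mu_X}[e^{tmf_j^2}]$ finite, is what forces the delicate splitting, and it is responsible both for the two separate constants and for the loss from $c_X/(8m)$ to $c_X/(16m)$ in the cleaner of the two bounds.
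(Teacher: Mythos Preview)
The paper does not prove this theorem. It is imported from the author's earlier article \cite{funa2} and invoked as a black box in the single sentence preceding it (``Theorem~\ref{mth2} follows from the same proof of Theorem~\ref{mth1} together with the inequality~(\ref{ps6}) and the following theorem''). There is therefore no in-paper argument to compare yours against.

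That said, your sketch is a standard and essentially correct route to an inequality of this shape. The linearization step is sound: on an Hadamard manifold $\exp_p^{-1}$ is globally defined, $1$-Lipschitz, and radially isometric, and the first-order condition $\int_N\exp_p^{-1}(y)\,d(f_\ast\mu_X)(y)=0$ does hold at the barycenter, so the coordinate functions $f_j$ are $1$-Lipschitz and mean-zero. The Chernoff--H\"older reduction then explains cleanly why the exponent degrades by a factor $m$, and the median--mean transfer together with the layer-cake bound on $\mathbb{E}[e^{s(f_j-\mathrm{med}\,f_j)^2}]$ is the right mechanism for producing constants involving $C_X$ and $e^{(\pi C_X)^2}$. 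The only soft spot is the final bookkeeping: you have not actually carried out the optimization in $t$ or the split that would pin down the specific prefactors $A_{m,X}$, $\widetilde{A}_{m,X}$ and the exponent $e^{(m+1)/(4m-2)}$, and the additive ``$1+\cdots$'' structure of those constants is at least as consistent with a direct tail integration in \cite{funa2} as with the Laplace-transform route you outline. Since the present paper gives no details, matching the constants exactly would require consulting \cite{funa2} itself.
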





\begin{thebibliography}{14}
              \bibitem{amir}D. Amir and V. D. Milman, {\it Unconditional and
                 symmetric sets in $n$-dimensional normed spaces},
                 Israel J. Math., {\bf 37} (1980), 3--20.
              \bibitem{sahib}A. Es-Sahib and H. Heinich, {\it Barycentre
                      canonique pour un espace m\'{e}trique \`a courbure n\'{e}gative}. (French. English, French summary) [Canonical barycenter for a negatively curved metric space] S\'{e}minaire de Probabilit\'{e}s, XXXIII, 355--370,
Lecture Notes in Math., {\bf 1709}, Springer, Berlin, 1999.
              \bibitem{funa1}K. Funano, {\it Central and $L^p$-concentration of
             $1$-Lipschitz maps into $\mathbb{R}$-trees},
                      J. Math. Soc. Japan, {\bf 61} (2009), 483--506.
              \bibitem{funa2}K. Funano, {\it Exponential and Gaussian
                   concentration of $1$-Lipschitz maps}, to appear in
                      Manuscripta Math.
                   \bibitem{funa3}K. Funano, {\it Observable concentration of
		 mm-spaces into spaces with doubling measures},
		 Geom. Dedicata {\bf 127} (2007), 49--56.
	 \bibitem{gromov}M. Gromov, {\it Metric structures for Riemannian and non-Riemannian spaces}, Based on the 1981 French
			 original, With appendices by M. Katz, P. Pansu and S. Semmes. Translated from the French by Sean Michael
			 Bates. Progress in Mathematics,
		 {\bf 152}. Birkh\"{a}user Boston, Inc., Boston, MA, 1999.
      \bibitem{ledoux}M. Ledoux, {\it The concentration of
                     measure phenomenon}, Mathematical Surveys and
                     Monographs, {\bf 89}. American
			 Mathematical Society, Providence, RI, 2001.
             	 \bibitem{sturm}K-T. Sturm, {\it Probability measures on metric
		 spaces of nonpositive curvature}, Heat kernels and
		 analysis on manifolds, graphs, and metric spaces
		 (Paris, 2002),  357--390, Contemp. Math., {\bf 338}, 
		 Amer. Math. Soc., Providence, RI, 2003.
	\end{thebibliography}
	\end{document}